\newtheorem{theorem}{Theorem}[section]
\numberwithin{equation}{section}
\def\DJ{{\hbox{{\thinspace}D\kern-.8em\raise.15ex\hbox{--}\kern.35em}}}
\def\DJo{{$\;$\kern-.4em{\DJ}okovi\'c}}
\def\NSERC{Supported in part by an NSERC Discovery Grant.}
\renewcommand{\subjclassname}{\textup{2000} Mathematics Subject
Classification}
\begin{document}
\addtocounter{section}{1}

\title[Generalization of Mirsky's theorem]
{Generalization of Mirsky's theorem on diagonals and eigenvalues of matrices}

\author {Dragomir \v{Z}. \DJo}
\address{Department of Pure Mathematics and Institute for Quantum Computing, University of Waterloo,
Waterloo, Ontario, N2L 3G1, Canada}
\email{djokovic@uwaterloo.ca}
\thanks{\NSERC}

\keywords{characteristic polynomial, companion matrix, principal minors}

\date{}

\begin{abstract} Mirsky proved that, for the existence of a 
complex matrix with given eigenvalues and diagonal entries, the 
obvious necessary condition is also sufficient. We generalize 
this theorem to matrices over any field and provide a short 
proof. Moreover, we show that there is a unique 
companion-matrix-type solution for this problem.
\end{abstract}

\maketitle 
\subjclassname{ 15A18, 15A29 }
\vskip5mm

If $\lambda_1,\ldots,\lambda_n$ are the eigenvalues of a 
complex matrix $A$ of order $n$ and $d_1,\ldots,d_n$ are its 
diagonal elements, then the sum of the $\lambda_i$'s is 
necessarily equal to the sum of the $d_i$'s. 
Mirsky \cite{LM} proved that the converse holds. 
If the data are real numbers, he proved that $A$ can be chosen 
to be real as well. 
For a recent short proof of Mirsky's results see \cite{CL}. 

Instead of specifying the eigenvalues of a matrix we shall 
specify its characteristic polynomial. We shall work over any 
field $F$. Let 
\begin{equation*} \label{kar-pol}
f(t)=t^n+c_{n-1}t^{n-1}+c_{n-2}t^{n-2}+\cdots+c_0
\end{equation*}
be a monic polynomial over $F$.

\begin{theorem} \label{PrvaTeorema}
Given a sequence $d_1,\ldots,d_n$ in $F$ with 
$d_1+\cdots+d_n=-c_{n-1}$, there exists a unique sequence 
$b_1,\ldots,b_{n-1}$ in $F$ such that $f(t)$ is the characteristic polynomial of the matrix
\begin{equation*} \label{moja-mat}
A=[a_{ij}]=\left[ \begin{array}{cccccc}
d_1 & 0   & 0   & \cdots & 0       & b_1 \\
1   & d_2 & 0   &        & 0       & b_2   \\
0   & 1   & d_3 &        & 0       & b_3   \\
\vdots & & & & & \\
0   & 0   & 0   &        & d_{n-1} & b_{n-1} \\
0   & 0   & 0   &        & 1       & d_n \end{array} \right].
\end{equation*}
\end{theorem}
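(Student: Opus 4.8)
The plan is to compute the characteristic polynomial $\det(tI-A)$ explicitly as a function of the unknowns $b_1,\ldots,b_{n-1}$, and then to show that the equation $\det(tI-A)=f(t)$ determines the $b_i$ uniquely.

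First I would expand $\det(tI-A)$ along its last column. Writing $g_j=t-d_j$, the matrix $tI-A$ is lower bidiagonal in its first $n-1$ columns (diagonal entries $g_j$, subdiagonal entries $-1$), while its last column is $(-b_1,\ldots,-b_{n-1},g_n)^{T}$. Deleting row $i$ and the last column leaves a block-diagonal matrix: an $(i-1)\times(i-1)$ lower-triangular block with diagonal $g_1,\ldots,g_{i-1}$, together with an $(n-i)\times(n-i)$ upper-triangular block all of whose diagonal entries equal $-1$. Hence this minor equals $(-1)^{n-i}g_1\cdots g_{i-1}$, and after combining with the cofactor sign $(-1)^{i+n}$ one obtains
\begin{equation*}
\det(tI-A)=p_n(t)-\sum_{i=1}^{n-1}b_i\,p_{i-1}(t),\qquad p_k(t):=\prod_{j=1}^{k}(t-d_j),
\end{equation*}
with the convention $p_0=1$. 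This sign and block bookkeeping is the step most likely to cause trouble, so I would verify the formula directly for $n=2$ and $n=3$.

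Next, the requirement $\det(tI-A)=f(t)$ rearranges to
\begin{equation*}
\sum_{i=1}^{n-1}b_i\,p_{i-1}(t)=p_n(t)-f(t).
\end{equation*}
Both $p_n$ and $f$ are monic of degree $n$, and their coefficients of $t^{n-1}$ agree: the coefficient of $t^{n-1}$ in $p_n$ is $-(d_1+\cdots+d_n)$, which equals $c_{n-1}$ by the hypothesis $d_1+\cdots+d_n=-c_{n-1}$. Therefore the difference $p_n(t)-f(t)$ has degree at most $n-2$.

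Finally I would use that $p_0,p_1,\ldots,p_{n-2}$ are monic of the distinct degrees $0,1,\ldots,n-2$ and hence form a basis of the space of polynomials of degree at most $n-2$. Since $p_n(t)-f(t)$ lies in this space, it has a unique expansion of the form $\sum_{i=1}^{n-1}b_i\,p_{i-1}(t)$; the resulting coefficients $b_1,\ldots,b_{n-1}$ are precisely the desired sequence, and their uniqueness is immediate. This establishes existence and uniqueness simultaneously.
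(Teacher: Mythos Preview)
Your argument is correct and complete: the cofactor expansion along the last column really does give
\[
\det(tI-A)=p_n(t)-\sum_{i=1}^{n-1}b_i\,p_{i-1}(t),
\]
and the basis property of $p_0,\ldots,p_{n-2}$ then yields existence and uniqueness in one stroke.

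The paper, however, proceeds differently. It compares coefficients of $t^{k-1}$ using the classical identity that expresses the coefficient of $t^{k-1}$ in $\det(tI-A)$ as a signed sum of principal minors $M_S$ with $|S|=n-k+1$. The key combinatorial claim there is that among all such minors, $b_k$ occurs only in $M_{\{k,\ldots,n\}}$, and it occurs linearly; hence the resulting $(n-1)\times(n-1)$ system in $b_1,\ldots,b_{n-1}$ is triangular and therefore uniquely solvable. Your approach bypasses this minor-by-minor analysis by exploiting the special structure of the last column directly, which is shorter and simultaneously yields an implicit recipe for the $b_i$ (expand $p_n-f$ in the Newton-type basis $p_0,\ldots,p_{n-2}$). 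The paper's route, on the other hand, makes the triangularity of the coefficient equations explicit and ties the result to the principal-minor description of characteristic coefficients; the paper then supplies explicit closed formulae for the $b_k$ separately via a similarity $AT=TC$ with the companion matrix.
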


\begin{proof}
For any $S\subseteq\{1,\ldots,n\}$, let $A_S$ be
the submatrix of $A$ whose entries are the $a_{ij}$ 
with $i,j\in S$, and let $M_S=\det A_S$. 
It suffices to prove that the system of $n-1$ equations 
\begin{equation*} \label{sistem}
\sum_{S:|S|=n-k+1} M_S=(-1)^{n-k+1} c_{k-1}, \quad k=1,\ldots,n-1
\end{equation*}
in $n-1$ unknowns $b_1,\ldots,b_{n-1}$ has a unique solution 
in $F$. 
(By $|S|$ we denote the cardinality of $S$.) 
This is true because of the following claim: 
for each $k$ we have
\begin{equation*} \label{oblik-jed}
\sum_{S:|S|=n-k+1} M_S=(-1)^{n-k} b_k+g_k, 
\end{equation*}
where $g_k$ is a polynomial in the unknowns 
$b_{k+1},\ldots,b_{n-1}$ only. 

To prove this claim, it suffices to show that if $b_k$ occurs in 
$M_S$ and $|S|\le n-k+1$ then $S=\{k,\ldots,n\}$. 
By the hypothesis $b_k$ occurs in $M_S$, and so  
$\{k,n\}\subseteq S$ and there must exist a permutation $\pi$ 
of $S$ such that $\pi n=k$ and 
\begin{equation*} \label{proizvod}
\prod_{i\in S} a_{\pi i,i}\ne0. 
\end{equation*}
Hence, $\pi i\in\{i,i+1\}$ for $i\in S\setminus\{n\}$. 
As $k\in S$ and $\pi n=k$, we have $\pi k=k+1$. 
If $k<n-1$ then $\pi(k+1)\ne k+1$ and so $\pi(k+1)=k+2$, etc. 
By repeating this argument, we conclude that 
$\{k,\ldots,n\}\subseteq S$. As $|S|\le n-k+1$, it follows that 
$S=\{k,\ldots,n\}$.
This completes the proof of our claim and the theorem.
\end{proof}

If $d_n=-c_{n-1}$ and all other $d_i=0$, then $A$ becomes the 
well known Frobenius companion matrix of $f(t)$:
\begin{equation*} \label{Frob-mat}
C=\left[ \begin{array}{cccccc}
0   & 0   & 0   & \cdots & 0  & -c_0 \\
1   & 0   & 0   &        & 0  & -c_1   \\
0   & 1   & 0   &        & 0  & -c_2   \\
\vdots & & & & & \\
0   & 0   & 0   &        & 0  & -c_{n-2} \\
0   & 0   & 0   &        & 1  & -c_{n-1} \end{array} \right].
\end{equation*}

The next theorem, which has much simpler proof, provides explicit 
formulae for the unknown elements $b_1,\ldots,b_{n-1}$ and 
establishes the existence assertion of Theorem \ref{PrvaTeorema}, but not the uniqueness.

For $k=0,1,\ldots,n-1$, denote by $h_r(d_1,\ldots,d_k)$ the sum 
of all monomials in $d_1,\ldots,d_k$ of degree $r$. 
(In particular $h_0(d_1,\ldots,d_k)=1$). Let $I_n$ be the 
identity matrix of order $n$.

\begin{theorem} \label{DrugaTeorema}
If $d_n=-c_{n-1}-d_1-\cdots-d_{n-1}$ and
\begin{equation} \label{formule-b}
b_k=-\sum_{i=k-1}^n c_i h_{i-k+1}(d_1,\ldots,d_k), \quad
k=1,\ldots,n-1,
\end{equation}
where $c_n=1$, then $\det(tI_n-A)=f(t)$. 
\end{theorem}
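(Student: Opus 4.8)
The plan is to compute $\det(tI_n-A)$ directly, exploiting that $A$ is lower triangular apart from its last column. For $1\le j\le n$ let $A^{(j)}$ be the matrix of the same shape built from $d_j,\ldots,d_n$ and $b_j,\ldots,b_{n-1}$, and put $P_j(t)=\det(tI-A^{(j)})$, so that $P_1(t)=\det(tI_n-A)$ and $P_n(t)=t-d_n$. Expanding $P_j(t)$ along its first row, which has only the two nonzero entries $t-d_j$ in the corner and $-b_j$ at the end, the first cofactor is exactly $P_{j+1}(t)$, while the minor attached to $-b_j$ is upper triangular with $-1$'s on its diagonal. A routine sign check collapses the two terms into the recursion
\[
P_j(t)=(t-d_j)P_{j+1}(t)-b_j,\qquad j=1,\ldots,n-1.
\]
Unrolling it yields the closed form
\[
\det(tI_n-A)=\prod_{i=1}^n(t-d_i)-\sum_{k=1}^{n-1}b_k\prod_{i=1}^{k-1}(t-d_i).
\]

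The crux of the argument is a change of basis. Set $\pi_j(t)=\prod_{i=1}^j(t-d_i)$, so that $\pi_0,\ldots,\pi_n$ is a (monic, triangular) basis of the polynomials of degree $\le n$. I would establish the expansion
\[
t^m=\sum_{j=0}^m h_{m-j}(d_1,\ldots,d_{j+1})\,\pi_j(t),\qquad m\ge 0,
\]
by induction on $m$. The inductive step uses $t\,\pi_j(t)=\pi_{j+1}(t)+d_{j+1}\pi_j(t)$ to pass from $t^m$ to $t^{m+1}$; collecting the coefficient of each $\pi_l(t)$ then reduces the claim to the standard recursion $h_r(d_1,\ldots,d_{k+1})=h_r(d_1,\ldots,d_k)+d_{k+1}h_{r-1}(d_1,\ldots,d_{k+1})$ for complete homogeneous symmetric polynomials. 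This expansion is the main thing to get right and is where the functions $h_r$ enter the picture; everything surrounding it is bookkeeping.

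Finally I would substitute this expansion into $f(t)=\sum_{i=0}^n c_it^i$ and interchange the order of summation to obtain $f(t)=\sum_{j=0}^n\gamma_j\,\pi_j(t)$ with $\gamma_j=\sum_{i=j}^n c_i\,h_{i-j}(d_1,\ldots,d_{j+1})$. Rewriting the closed form for $P_1(t)$ in the same basis gives $P_1(t)=\pi_n(t)-\sum_{j=0}^{n-2}b_{j+1}\pi_j(t)$, and equating coefficients in $\{\pi_j\}$ finishes the proof. The coefficient of $\pi_n$ forces $\gamma_n=c_n=1$, which holds automatically; the coefficient of $\pi_{n-1}$ gives $\gamma_{n-1}=c_{n-1}+(d_1+\cdots+d_n)$, which vanishes precisely by the hypothesis $d_n=-c_{n-1}-d_1-\cdots-d_{n-1}$; and for $j=0,\ldots,n-2$ the requirement $-b_{j+1}=\gamma_j$ becomes, after setting $k=j+1$, exactly formula (\ref{formule-b}). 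Hence $\det(tI_n-A)=f(t)$, as claimed.
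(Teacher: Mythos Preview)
Your argument is correct. The recursion $P_j(t)=(t-d_j)P_{j+1}(t)-b_j$ is right (the minor attached to $-b_j$ is indeed upper triangular with $-1$'s on the diagonal, and the signs match), the closed form follows, and your expansion $t^m=\sum_{j=0}^m h_{m-j}(d_1,\ldots,d_{j+1})\pi_j(t)$ together with the comparison of coefficients in the $\{\pi_j\}$ basis finishes the job exactly as you describe. One cosmetic remark: for $j=n$ the coefficient $h_0(d_1,\ldots,d_{n+1})$ mentions a phantom $d_{n+1}$, but since $h_0\equiv 1$ this causes no harm.

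The paper takes a different, shorter route. It writes down an upper triangular matrix $T=[t_{ij}]$ with $t_{ij}=h_{j-i}(d_1,\ldots,d_i)$, observes that $T$ is invertible because its diagonal entries are all $1$, and then simply asserts (leaving the verification to the reader) that $AT=TC$, where $C$ is the Frobenius companion matrix of $f$. Similarity of $A$ and $C$ immediately gives $\det(tI_n-A)=\det(tI_n-C)=f(t)$. Your approach and the paper's are in fact two faces of the same idea: the matrix $T$ is precisely (after the index shift $i\mapsto i-1$) the change-of-basis matrix from the monomials $\{t^{i-1}\}$ to your $\{\pi_{i-1}\}$, and the identity $AT=TC$ encodes that $A$ represents multiplication by $t$ on $F[t]/(f)$ in the basis $\pi_0,\ldots,\pi_{n-1}$. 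The paper's version is slicker but hides the computation inside ``straightforward''; your version is longer but makes transparent exactly why the $h_r$'s appear and where the formula for $b_k$ comes from.
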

\begin{proof}
Let $T=[t_{ij}]$ be the upper triangular matrix with entries 
$t_{ij}=h_{j-i}(d_1,\ldots,d_i)$, $1\le i\le j\le n$. 
As all $t_{ii}=1$, $T$ is invertible. 
It suffices to verify that $AT=TC$, which is straightforward. 
\end{proof}

For example, if $n=4$ then the above formulae read
\begin{eqnarray*}
b_1 &=& -c_0-c_1d_1-c_2d_1^2-c_3d_1^3-d_1^4,\\
b_2 &=& -c_1-c_2(d_1+d_2)-c_3(d_1^2+d_1d_2+d_2^2)
-(d_1^3+d_1^2d_2+d_1d_2^2+d_2^3),\\
b_3 &=& -c_2-c_3(d_1+d_2+d_3)
-(d_1^2+d_2^2+d_3^2+d_1d_2+d_1d_3+d_2d_3).
\end{eqnarray*}

\end{document}